\documentclass[12pt,reqno]{amsart}
\usepackage[english]{babel}
\usepackage{a4wide}
\usepackage[T1]{fontenc}
\usepackage{amssymb,amsmath,amsthm,latexsym}
\usepackage{mathrsfs}
\usepackage[usenames,dvipsnames]{color}
\usepackage{graphicx}
\usepackage{mdwlist}
\usepackage{enumerate}
\usepackage{mathtools,dsfont,wasysym}
\usepackage{stmaryrd}
\usepackage{hyperref}
\usepackage{multicol}
\hypersetup{colorlinks=true,  linkcolor=blue, citecolor=red, urlcolor=cyan}

\newtheorem{theorem}{Theorem}[section]
\newtheorem{lemma}[theorem]{Lemma}

\theoremstyle{definition}

\theoremstyle{remark}

\numberwithin{equation}{section}

\newcommand{\N}{\mathbb{N}}

\DeclareMathOperator{\re}{Re}

\newcommand{\nn}[1]{{\left\vert\kern-0.25ex\left\vert\kern-0.25ex\left\vert #1
		\right\vert\kern-0.25ex\right\vert\kern-0.25ex\right\vert}}
\renewcommand{\geq}{\geqslant}
\renewcommand{\leq}{\leqslant}

\newcommand{\supp}{\operatorname{supp}}

\newcounter{smallromans}

	{\end{list}}

\makeatletter
\renewcommand{\tocsection}[3]{%
	\indentlabel{\@ifnotempty{#2}{\bfseries\ignorespaces#1 #2\quad}}\bfseries#3}
\renewcommand{\tocsubsection}[3]{%
	\indentlabel{\@ifnotempty{#2}{\ignorespaces#1 #2\quad}}#3}

\newcommand\@dotsep{4.5}
\def\@tocline#1#2#3#4#5#6#7{\relax
	\ifnum #1>\c@tocdepth 
	\else
	\par \addpenalty\@secpenalty\addvspace{#2}%
	\begingroup \hyphenpenalty\@M
	\@ifempty{#4}{%
		\@tempdima\csname r@tocindent\number#1\endcsname\relax
	}{%
		\@tempdima#4\relax
	}%
	\parindent\z@ \leftskip#3\relax \advance\leftskip\@tempdima\relax
	\rightskip\@pnumwidth plus1em \parfillskip-\@pnumwidth
	#5\leavevmode\hskip-\@tempdima{#6}\nobreak
	\leaders\hbox{$\m@th\mkern \@dotsep mu\hbox{.}\mkern \@dotsep mu$}\hfill
	\nobreak
	\hbox to\@pnumwidth{\@tocpagenum{\ifnum#1=1\bfseries\fi#7}}\par
	\nobreak
	\endgroup
	\fi}
\AtBeginDocument{%
	\expandafter\renewcommand\csname r@tocindent0\endcsname{0pt}
}
\def\l@subsection{\@tocline{2}{0pt}{2.5pc}{5pc}{}}
\makeatother

\usepackage{todonotes}

\begin{document}
	\title{On a problem of Johnson and Wolfe }
	\author[García]{Domingo García}
	\address[Domingo García]{\mbox{}\newline \indent Departamento de Análisis Matemático \newline \indent Doctor Moliner 50 \newline \indent Universidad de Valencia \newline \indent Burjasot 46100, Spain.}
	\email{domingo.garcia@uv.es}
	\author[Maestre]{Manuel Maestre}
	\address[Manuel Maestre]{\mbox{}\newline \indent Departamento de Análisis Matemático \newline \indent Doctor Moliner 50 \newline \indent Universidad de Valencia \newline \indent Burjasot 46100, Spain.}
	\email{manuel.maestre@uv.es}
	\author[Rodríguez-Vidanes]{Daniel L. Rodr\'iguez-Vidanes}
	\address[D. L.~Rodr\'iguez-Vidanes]{\mbox{}\newline \indent Grupo de Investigación: Análisis Matemático y Aplicaciones (AMA) \newline \indent Departamento de Matemática Aplicada a la Ingeniería Industrial \newline \indent Escuela Técnica Superior de Ingeniería y Diseño Industrial \newline \indent Ronda de Valencia 3 \newline \indent Universidad Politécnica de Madrid \newline \indent Madrid, 28012, Spain.}
	\email{dl.rodriguez.vidanes@upm.es}
	\subjclass[2020]{Primary ?????. Secondary ?????.}   
	\keywords{?????}
	\thanks{
		?????
	}
	\maketitle
\begin{abstract}
	In 1979, Johnson and Wolfe \cite{JW} proved that norm-attaining operators are dense in $L(C(K),C(S))$ when $K$ and $S$ are compact Hausdorff spaces in the real setting.
	The corresponding complex case has remained open since then, mainly because the real proof relies on order and sign-decomposition arguments that are no longer available for complex measures.
	In this paper, we settle the complex case.
	We prove that, for arbitrary compact Hausdorff spaces $K$ and $S$, the set of norm-attaining operators from the complex space $C(K)$ into the complex space $C(S)$ endowed with the supremum norm is dense in $L(C(K),C(S))$.
	The proof replaces the real order-theoretic mechanism by a measure-theoretic phase-correction argument, based on polar decompositions, unimodular approximation, and a semicontinuity principle for weighted total
	variation.
	This yields a complex defect-reduction procedure which recovers the Johnson--Wolfe density theorem in full generality for complex $C(K)$-spaces.
\end{abstract}

\section{Auxiliary lemmas}

\begin{lemma}\label{semiconweightvar}
	Let $K$ be a compact Hausdorff space and let $f\in C(K)$ be a real-valued function with $f\geq 0$.
	Then the map
		$$
		\nu\in \mathcal M(K)\longmapsto \int_K f\,d|\nu|
		$$
	is weak-star lower semicontinuous on $\mathcal M(K)$, where $\mathcal M(K)=C(K)^*$ is the space of complex regular Borel measures on $K$.
\end{lemma}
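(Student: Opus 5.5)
The plan is to write the functional $\nu\mapsto\int_K f\,d|\nu|$ as a supremum of weak-star continuous functionals. A pointwise supremum of continuous functions is lower semicontinuous, so this gives the result directly. The candidate identity is
\begin{equation*}
\int_K f\,d|\nu| \;=\; \sup\Bigl\{ \Bigl|\int_K g\,d\nu\Bigr| \;:\; g\in C(K),\ |g|\leq f \text{ on } K\Bigr\}.
\end{equation*}
For each fixed $g\in C(K)$ the map $\nu\mapsto \bigl|\int_K g\,d\nu\bigr|$ is weak-star continuous on $\mathcal M(K)=C(K)^*$, by the definition of the weak-star topology and the continuity of the modulus. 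Once the identity is established, the lemma follows immediately.

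The inequality ``$\geq$'' is routine. For $|g|\leq f$ we have $\bigl|\int g\,d\nu\bigr|\leq \int |g|\,d|\nu|\leq \int f\,d|\nu|$.

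The main step is the reverse inequality. Fix $\varepsilon>0$ and use the polar decomposition $d\nu=h\,d|\nu|$, with $h$ Borel and $|h|=1$ $|\nu|$-a.e. Then $\int f\,d|\nu|=\int f\,\overline{h}\,d\nu$. By Lusin's theorem for the regular finite measure $|\nu|$ on the compact Hausdorff space $K$, there is $\varphi\in C(K)$ with $\|\varphi\|_\infty\leq 1$ and $|\nu|(\{\varphi\neq \overline h\})<\varepsilon$. Put $g=f\varphi$. Then $g$ is continuous and $|g|\leq f$. Moreover
\begin{equation*}
\Bigl|\int f\,d|\nu| - \int g\,d\nu\Bigr| = \Bigl|\int f(\overline h-\varphi)\,h\,d|\nu|\Bigr| \leq 2\|f\|_\infty\,\varepsilon,
\end{equation*}
so $\bigl|\int g\,d\nu\bigr|\geq \int f\,d|\nu|-2\|f\|_\infty\varepsilon$. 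Letting $\varepsilon\to0$ gives ``$\leq$''.

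The only delicate point is the Lusin approximation with the sup-norm bound $\|\varphi\|_\infty\leq1$. The standard version of Lusin's theorem provides this bound, since one may compose with the radial retraction of $\mathbb C$ onto the closed unit disc. Alternatively, one can approximate $\overline h$ in $L^1(|\nu|)$ by continuous functions using density of $C(K)$ in $L^1(|\nu|)$, and then apply the same retraction, which is $1$-Lipschitz and fixes $\overline h$ a.e.; this gives the same estimate. Since weak-star lower semicontinuity of a supremum of continuous functions holds for nets, no metrizability of the weak-star topology is needed.
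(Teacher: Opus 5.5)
Your proof is correct and follows the same route as the paper. Both write $\nu\mapsto\int_K f\,d|\nu|$ as a supremum, over $g\in C(K)$ with $|g|\leq f$, of weak-star continuous functionals. You use $|\int_K g\,d\nu|$ and the paper uses $\re\int_K g\,d\nu$, which is equivalent after multiplying $g$ by a unimodular constant. The only difference is in the reverse inequality: the paper appeals to $d|f\nu|=f\,d|\nu|$ and the Riesz isometry $\|f\nu\|=\sup_{\|u\|_\infty\leq 1}|\int_K uf\,d\nu|$, while you prove that step directly via the polar decomposition and a Lusin (or $L^1$-density plus radial retraction) approximation of $\overline{h}$.
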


\begin{proof}
	For every $\nu\in\mathcal M(K)$, we have
		\begin{equation}\label{supreal}
			\int_K f\,d|\nu| = \sup\left\{ \re \int_K g\,d\nu : g\in C(K),\ |g|\leq f \right\}.
		\end{equation}
	Indeed, the inequality
		$$
		\re \int_K g\,d\nu \leq \left|\int_K g\,d\nu\right| \leq \int_K |g|\,d|\nu| \leq \int_K f\,d|\nu|
		$$
	is immediate.
	For the reverse inequality, let us denote by $f\nu$ the complex measure defined by
		$$
		d(f\nu)=f\,d\nu.
		$$
	If $d\nu=\theta\,d|\nu|$ is the polar decomposition of $\nu$ (see \cite[Theorem~6.12]{Rudin} and the following paragraph), with $|\theta|=1$ $|\nu|$-a.e., then
		$$
		d(f\nu)=f\theta\,d|\nu|.
		$$
	Since the variation of a measure of the form $\alpha\,d\sigma$, with $\sigma$ positive, is $|\alpha|\,d\sigma$ (see \cite[Theorem~6.13]{Rudin}) and $f\geq 0$, we obtain
		$$
		d|f\nu|=|f\theta|\,d|\nu|=|f|\,d|\nu| = f\,d|\nu|.
		$$
	By the Riesz representation theorem,
		\begin{align}
			\int_K f\,d|\nu| & = \int_K \,d|f\nu| = |f\nu|(K) = \|f\nu\| \nonumber \\
			& = \sup_{\|u\|_\infty\leq1} \left|\int_K u\,d(f\nu)\right| = \sup_{\|u\|_\infty\leq1} \left|\int_K uf\,d\nu\right|. \label{riesz}
		\end{align}
	For each $u\in C(K)$ with $\|u\|_\infty \leq 1$, we have that $g=uf \in C(K)$ and satisfies
		$$
		|g|= |uf| = |u||f| \leq \|u\|_\infty |f| \leq |f| = f
		$$
	since $f\geq 0$.
	Moreover, replacing $g$ by $\lambda g$ for a suitable $\lambda\in\mathbb T$, we may replace the modulus by the real part in the last term of \eqref{riesz}.
	Hence,
		$$
		\int_K f\,d|\nu| \leq \sup\left\{ \re \int_K g\,d\nu : g\in C(K),\ |g|\leq f \right\}.
		$$
	Therefore, the desired identity holds.
	
	For each fixed $g\in C(K)$, the map
		$$
		\nu\longmapsto \int_K g\,d\nu
		$$
	is weak-star continuous by the definition of the weak-star topology on $\mathcal M(K)=C(K)^*$.
	Therefore,
		$$
		\nu\longmapsto \re \int_K g\,d\nu
		$$
	is weak-star continuous as well.
	Hence, the map $\nu\longmapsto \int_K f\,d|\nu|$ is the supremum of a family of weak-star continuous real-valued functions.
	Therefore, it is weak-star lower semicontinuous.
	Indeed, it is clear that the supremum of any family of continuous real-valued functions is lower semicontinuous, so it is enough to show that it is weak-star continuous.
	Let $\alpha\in\mathbb R$.
	We will show that the set
		$$
		\left\{ \nu\in\mathcal M(K) : \int_K f\,d|\nu|>\alpha \right\}
		$$
	is weak-star open.
	Let $\nu_0\in\mathcal M(K)$ be such that $\int_K f\,d|\nu_0| > \alpha$.
	By \eqref{supreal}, there exists some $g_0\in C(K)$, with $|g_0|\leq f$, such that
		$$
		\re \int_K g_0\,d\nu_0 >\alpha.
		$$
	But $\nu \longmapsto \re \int_K g_0\,d\nu$ is weak-star continuous.
	Therefore, there exists a weak-star neighborhood $U$ of $\nu_0$ such that, for every $\nu\in U$,
		$$
		\re \int_K g_0\,d\nu>\alpha.
		$$
	So, by \eqref{supreal}, we obtain that
		$$
		\int_K f\,d|\nu|>\alpha
		$$
	for every $\nu\in U$.
	Thus,
		$$
		U \subset \left\{ \nu\in\mathcal M(K): \int_K f\,d|\nu|>\alpha \right\}.
		$$
	This concludes the proof.
\end{proof}

\begin{lemma}\label{complexprep}
	Let $K$ and $S$ be compact Hausdorff spaces, and let $\mu:S\to\mathcal M(K)$ be weak-star continuous.
	Then, for every $\delta>0$, there exist a function $h\in C(K)$ with $|h(t)|=1$ for all $t\in K$ and a nonempty open set $U\subset S$ such that
		$$
		\re\int_K h\,d\mu(s)>\|\mu\|-\delta
		$$
	for every $s\in U$.
\end{lemma}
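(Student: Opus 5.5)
The plan is to reduce everything to a single point $s_0\in S$, build a unimodular $h$ that nearly norms the measure $\mu(s_0)$, and then spread the strict inequality to an open neighbourhood using weak-star continuity. Throughout, $\|\mu\|=\sup_{s\in S}\|\mu(s)\|$. This is finite: $s\mapsto\mu(s)(g)$ is continuous on the compact space $S$ for each $g\in C(K)$, so the family $\{\mu(s)\}$ is pointwise bounded, and the uniform boundedness principle applies.

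\emph{Step 1 (choice of $s_0$ and a nearly norming continuous function).} Fix $\delta>0$ and pick $s_0\in S$ with $\|\mu(s_0)\|>\|\mu\|-\delta/3$. By the Riesz representation theorem, as used in the proof of Lemma~\ref{semiconweightvar}, there is $g\in C(K)$ with $\|g\|_\infty\leq 1$ and $\left|\int_K g\,d\mu(s_0)\right|>\|\mu(s_0)\|-\delta/3$. Multiplying $g$ by a suitable $\lambda\in\mathbb T$, we may replace the modulus by the real part. Replacing $g$ by $(1-\eta)g$ for a small $\eta>0$, we may also assume $\|g\|_\infty<1$. The result is
$$\re\int_K g\,d\mu(s_0)>\|\mu\|-\tfrac{2\delta}{3}.$$

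\emph{Step 2 (passing from $g$ to a unimodular $h$; the main obstacle).} One cannot in general approximate $g$, or the polar phase $\overline{\theta}$ of $\mu(s_0)$, uniformly by continuous unimodular functions. There are topological obstructions, for instance when $K$ is a circle, and the continuous argument of $g$ breaks down at its zeros. I would avoid approximation and instead use convexity. By the Russo--Dye theorem applied to the commutative unital C$^*$-algebra $C(K)$, whose unitaries are exactly the continuous unimodular functions, every $g$ with $\|g\|_\infty<1$ is a finite convex combination $g=\sum_j t_j u_j$ with $|u_j|\equiv1$ and $u_j\in C(K)$.

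The functional $\varphi\mapsto\re\int_K\varphi\,d\mu(s_0)$ is real-linear. Hence
$$\re\int_K g\,d\mu(s_0)=\sum_j t_j\re\int_K u_j\,d\mu(s_0),$$
so some $h=u_j$ satisfies $\re\int_K h\,d\mu(s_0)>\|\mu\|-2\delta/3$.

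If one prefers to avoid citing Russo--Dye, one can instead use an elementary decomposition valid wherever $|g|$ is bounded away from $0$. Each point of the open disc is a midpoint of two unimodular numbers. This would require first perturbing $g$ away from zero, and I expect that perturbation to be the delicate point, which is why I would lean on Russo--Dye.

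\emph{Step 3 (openness).} The map $s\mapsto\re\int_K h\,d\mu(s)$ is continuous on $S$, because $\mu$ is weak-star continuous and $h\in C(K)$ is fixed. Therefore
$$U=\left\{s\in S:\re\int_K h\,d\mu(s)>\|\mu\|-\delta\right\}$$
is open. It contains $s_0$, so it is nonempty. This gives the required $h$ and $U$.
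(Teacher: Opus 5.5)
Your proof is correct, but it takes a different route from the paper's at the one substantive point, the passage to a unimodular $h$.

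The paper never introduces a nearly norming $g$. It takes the polar decomposition $d\nu=\theta\,d|\nu|$ of $\nu=\mu(s_0)$ and builds by hand a continuous unimodular $h$ with $\int_K|h-\overline{\theta}|\,d|\nu|<\delta/4$. To do so, it splits $\mathbb T$ into small Borel pieces $A_j$ and chooses compact $F_j\subset\overline{\theta}^{-1}(A_j)$ carrying almost all of $|\nu|$. It separates these by disjoint open sets and sets $h=\prod_j\gamma_j\circ\chi_j$, with Urysohn functions $\chi_j$ and paths $\gamma_j$ in $\mathbb T$ from $1$ to $\lambda_j$. Then $\re\int_K h\,d\nu\geq\|\nu\|-\int_K|h-\overline{\theta}|\,d|\nu|$. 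So the obstruction you raise in Step~2 only affects uniform approximation. In $L^1(|\nu|)$ the phase can always be approximated by continuous unimodular functions (the paper's $h$ is even of the form $e^{i\phi}$ with $\phi$ real and continuous), because all transitions are confined to a set of small $|\nu|$-measure.

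Your argument is softer. You only use that $\re\int_K\cdot\,d\mu(s_0)$ is a norm-continuous real-linear functional and that the open unit ball of $C(K)$ lies in the convex hull of the unitaries, so averaging forces some unitary to do at least as well as $g$. This avoids polar decompositions and regularity entirely and works in any unital C$^*$-algebra. The price is importing a nontrivial theorem. The finite-combination form for the open ball is the Gardner/Kadison--Pedersen refinement of Russo--Dye, but the classical closed-convex-hull statement would also suffice, since your inequality is strict. In exchange, the paper's construction gives quantitative information: $h\theta$ is close to $1$ in $L^1(|\nu|)$. This is the same kind of phase approximation that drives Lemma~\ref{defect-reduction}.

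If you want your route self-contained, Russo--Dye is elementary in $C(K)$. For $\|g\|_\infty<1$, the functions $u_\tau=(e^{i\tau}+g)/(1+\overline{g}\,e^{i\tau})$ are continuous, unimodular, and norm-continuous in $\tau$. The mean value property of $\lambda\mapsto(\lambda+z)/(1+\overline{z}\lambda)$ gives $g=\frac{1}{2\pi}\int_0^{2\pi}u_\tau\,d\tau$ in $C(K)$. Applying your functional then yields some $\tau$ with $\re\int_K u_\tau\,d\mu(s_0)\geq\re\int_K g\,d\mu(s_0)$.
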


\begin{proof}
	If $\|\mu\|=0$, then $\mu(s)=0$ for every $s\in S$. In this case, take $h\equiv 1$ and take any non-empty open set $U\subset S$.
	Then,
		$$
		\re\int_K h\,d\mu(s) = \re\int_K \,d\mu(s) = \re\mu(s)(K) = 0 > -\delta = \|\mu\|-\delta
		$$
	for every $s\in U$.
	
	Assume now that $\|\mu\|>0$.
	Fix $s_0\in S$ such that
		$$
		\|\mu(s_0)\|>\|\mu\|-\frac{\delta}{4},
		$$
	and set $\nu:=\mu(s_0)$.
	Write the polar decomposition of $\nu$ as
		$$
		d\nu=\theta\,d|\nu|,
		$$
	where $|\theta|=1$ $|\nu|$-a.e., and put $\varphi:=\overline{\theta}$.
	
	We will construct a continuous function $h\in C(K)$ of modulus $1$ such that
		$$
		\int_K |h-\varphi|\,d|\nu|<\frac{\delta}{4}.
		$$
	
	Since $\nu$ is a finite regular complex Borel measure, $|\nu|$ is a finite positive regular Borel measure on $K$.
	Fix $\eta>0$ small enough so that
		$$
		\eta\,|\nu|(K)<\frac{\delta}{8}.
		$$
	Partition the unit circle $\mathbb T$ into finitely many Borel sets $A_1,\ldots,A_N$ of diameter smaller than $\eta>0$.
	Choose $\lambda_j\in A_j$ for each $j=1,\ldots,N$, and set
		$$
		E_j := \varphi^{-1}(A_j).
		$$
	Notice that the sets $E_1,\ldots,E_N$ form a measurable partition of $K$, up to a $|\nu|$-null set.
	By regularity of $|\nu|$, choose compact sets $F_j\subset E_j$ such that
		$$
		|\nu| \left( K\setminus F \right)<\frac{\delta}{16},
		$$
	where
		$$
		F := \bigcup_{j=1}^N F_j
		$$
	Note that the compact sets $F_1,\ldots,F_N$ are pairwise disjoint.
	Since $K$ is normal, we may choose pairwise disjoint open sets $W_1,\ldots,W_N$ such that $F_j\subset W_j$ for every $j=1,\ldots,N$.
	For each $j=1,\ldots,N$, by Urysohn's lemma, there is a continuous function $\chi_j:K\to[0,1]$ such that
		$$
		\chi_j|_{F_j}\equiv 1 \qquad \text{and} \qquad \supp(\chi_j)\subset W_j.
		$$
	Since the unit circle $\mathbb T$ is path connected, for each $j=1,\ldots,N$ choose a continuous path $\gamma_j:[0,1]\to\mathbb T$
	such that
		$$
		\gamma_j(0)=1 \qquad \text{and} \qquad \gamma_j(1)=\lambda_j.
		$$
	Define, for all $t\in K$,
		$$
		h(t) := \prod_{j=1}^N \gamma_j(\chi_j(t)).
		$$
	Then, $h\in C(K)$ and $|h(t)|=1$ for every $t\in K$.
	Moreover, for every $j=1,\ldots,N$, if $t\in F_j$, then $\chi_j(t)=1$ and $\chi_i(t)=0$ for all $i\in \{1,\ldots,N\}$ different from $j$, so $h(t)=\lambda_j$.
	Thus, for all $j=1,\ldots,N$ and $t\in F_j$,
		$$
		|h(t)-\varphi(t)|=|\lambda_j-\varphi(t)|<\eta.
		$$
	On the remaining set $K\setminus F$, we use the estimate
		$$
		|h(t)-\varphi(t)|\leq 2.
		$$
	Therefore, by the previous estimates and the choice of $\eta$,
		\begin{align}
			\int_K |h-\varphi|\,d|\nu| & = \int_F |h-\varphi|\,d|\nu| + \int_{K\setminus F} |h-\varphi|\,d|\nu| \nonumber \\
			& \leq \eta\,|\nu|(K)+2|\nu|(K\setminus F) \nonumber \\
			& < \frac{\delta}{8}+\frac{\delta}{8} = \frac{\delta}{4}. \label{hphi}
		\end{align}
	This finishes the construction of $h$.
	
	Now,
		$$
		\re \int_K h\,d\nu = \re \int_K h\theta\,d|\nu| = \int_K \re(h\theta)\,d|\nu|.
		$$
	Since $|h\theta-1|=|(h-\overline{\theta})\theta|=|h-\overline{\theta}|$ $|\nu|$-a.e. and $\re z\geq 1-|z-1|$ for every $z\in\mathbb C$, we obtain $\re(h\theta) \geq 1-|h\theta-1| = 1-|h-\overline{\theta}|$.
	Therefore, by \eqref{hphi},
		\begin{align*}
			\re\int_K h\,d\nu & = \int_K \re(h\theta)\,d|\nu| \\
			& \geq \int_K \,d|\nu| - \int_K |h-\overline{\theta}|\,d|\nu| = |\nu|(K) - \int_K |h-\overline{\theta}|\,d|\nu| = \|\nu\| - \int_K |h-\overline{\theta}|\,d|\nu| \\
			& > \|\nu\|-\frac{\delta}{4}.
		\end{align*}
	Since $\|\nu\|=\|\mu(s_0)\|>\|\mu\|-\delta/4$, it follows that
		$$
		\re \int_K h\,d\mu(s_0) > \|\mu\|-\frac{\delta}{2}.
		$$
	
	Finally, since $\mu:S\to\mathcal M(K)$ is weak-star continuous and $h\in C(K)$, the map
		$$
		s\in S\longmapsto \int_K h\,d\mu(s)
		$$
	is continuous.
	Hence, the real-valued map
		$$
		s\in S\longmapsto \re \int_K h\,d\mu(s)
		$$
	is continuous. Since at $s_0$ it is strictly larger than $\|\mu\|-\delta/2$, there exists an open neighbourhood $U\subset S$ of $s_0$ such that
		$$
		\re \int_K h\,d\mu(s)>\|\mu\|-\delta
		$$
	for every $s\in U$.
	This proves the lemma.
\end{proof}

\begin{lemma}\label{defect-reduction}
	Let $K$ and $S$ be compact Hausdorff spaces, $U\subset S$ a nonempty open set, $\mu:S\to \mathcal M(K)$ a weak-star continuous, $1/2<r<1$, and put $M:=\|\mu\|=\sup_{s\in S}\|\mu(s)\|$.
	If there exists $\varepsilon>0$ such that $\re \mu(s)(K)>M-\varepsilon$ for every $s\in U$, then there exist a weak-star continuous map $\mu':S\to\mathcal M(K)$ with $\|\mu'\| \leq M$ and a nonempty open set $U'\subset U$ such that
		$$
		\re \mu'(s)(K)>\|\mu'\|-r\varepsilon,
		$$
	for every $s\in U'$, and
		$$
		\|\mu'-\mu\|\leq \sqrt{2M\varepsilon}+2\varepsilon.
		$$
\end{lemma}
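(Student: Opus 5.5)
The plan is to perform a \emph{localized phase correction} of $\mu$ near a well-chosen point of $U$, and to pay for the resulting norm bookkeeping with a small global rescaling. The $\sqrt{2M\varepsilon}$ term should come from the identity $|1-z|^2=2(1-\re z)$ for $|z|=1$ together with Cauchy--Schwarz. The $2\varepsilon$ term should absorb the rescaling and the approximation errors.

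\emph{Step 1: near-positivity at one point.} Fix $s_0\in U$ and put $\nu:=\mu(s_0)$, with polar decomposition $d\nu=\theta\,d|\nu|$. Since $\re\nu(K)=\int_K\re\theta\,d|\nu|>M-\varepsilon$ and $|\nu|(K)\le M$, we get $\int_K(1-\re\theta)\,d|\nu|<\varepsilon$. By Cauchy--Schwarz,
$$
\int_K|1-\theta|\,d|\nu|\le\Bigl(2|\nu|(K)\int_K(1-\re\theta)\,d|\nu|\Bigr)^{1/2}<\sqrt{2M\varepsilon}.
$$
Exactly as in the proof of Lemma~\ref{complexprep}, I will build a continuous unimodular $h\in C(K)$ with $\int_K|h-\overline\theta|\,d|\nu|<\eta$, where $\eta>0$ is as small as needed. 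Then $\re\int_K h\,d\nu>\|\nu\|-\eta$. Moreover $\int_K|h-1|\,d|\nu|<\sqrt{2M\varepsilon}+\eta$, since $\overline\theta$ and $\theta$ have the same distance to $1$.

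\emph{Step 2: localize and control nearby points.} Take a Urysohn function $\phi:S\to[0,1]$ with $\phi(s_0)=1$, supported in an open $V\subset U$ to be shrunk later. I would define
$$
d\mu'(s)=\lambda\bigl(1-\phi(s)+\phi(s)h\bigr)\,d\mu(s),
$$
where $\lambda\in(0,1]$ is a constant close to $1$. The multiplier is continuous on $K$ and varies continuously with $s$ in the sup norm, so $\mu'$ is weak-star continuous. Since $|1-\phi+\phi h|\le 1$, we get $\|\mu'\|\le\lambda M\le M$. The distance satisfies
$$
\|\mu'(s)-\mu(s)\|\le (1-\lambda)M+\phi(s)\int_K|h-1|\,d|\mu(s)|.
$$
The key point is to bound $\int_K|h-1|\,d|\mu(s)|$ for $s\in V$ by quantities attached to $\nu$. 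Upper bounds are not directly available from weak-star convergence, so I would again use $|h-1|^2=2(1-\re h)$ and Cauchy--Schwarz. This gives
$$
\int_K|h-1|\,d|\mu(s)|\le\Bigl(2M\bigl(M-\textstyle\int_K \re h\,d|\mu(s)|\bigr)\Bigr)^{1/2}.
$$
I would then control $\int_K\re h\,d|\mu(s)|$ from below via Lemma~\ref{semiconweightvar}. Apply it with the nonnegative weight $f=1+\re h$, combined with $\re\mu(s)(K)>M-\varepsilon$ on $U$, and shrink $V$ so that the lower semicontinuity estimate holds up to $\eta$. This should yield $\|\mu'-\mu\|\le\sqrt{2M\varepsilon}+2\varepsilon$ once $\eta$ and $1-\lambda$ are small. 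The precise constants are the step I am least sure of: a crude version of this estimate gives $\sqrt{4M\varepsilon}$. If that happens, I would instead bound $\int|h-1|\,d|\mu(s)|$ directly by comparing $h$ with the phase $\overline{\theta_s}$ of $\mu(s)$, using $\int_K(1-\re\theta_s)\,d|\mu(s)|<\varepsilon$ on $U$.

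\emph{Step 3: the defect at $s_0$, which I expect to be the main obstacle.} At $s_0$ we have $\re\mu'(s_0)(K)=\lambda\re\int_K h\,d\nu>\lambda(\|\nu\|-\eta)$. The defect is measured against $\|\mu'\|$, which may be as large as $\lambda M$, while $\|\nu\|$ is only known to exceed $M-\varepsilon$. So a naive argument yields defect about $\varepsilon$, not $r\varepsilon$. My first attempt is to decouple the scaling: shrink the \emph{uncorrected} part by $\lambda=1-\varepsilon/M$ (costing $\le\varepsilon$ in norm), but keep the corrected part at full strength, using the multiplier $\lambda(1-\phi)+\phi h$. This bounds $\|\mu'\|$ by $\max\bigl(\lambda M,\sup_{s\in V}\|\mu(s)\|\bigr)$.

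If $\sup_V\|\mu(s)\|$ is still too large, I would choose $s_0$ and $V$ so that $\|\nu\|$ is within $(r-\tfrac12)\varepsilon$ of $\sup_V\|\mu\|$. This is possible because $s\mapsto\|\mu(s)\|$ is weak-star lower semicontinuous, by Lemma~\ref{semiconweightvar} with $f\equiv1$, so near-maximal values persist on open sets. Finally, I take $U'\subset V$ to be a neighbourhood of $s_0$ on which $\phi>1-\eta$ and $\re\mu'(s)(K)>\re\mu'(s_0)(K)-\eta$, using continuity of $s\mapsto\int_K(\lambda(1-\phi(s))+\phi(s)h)\,d\mu(s)$. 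The hypothesis $r>1/2$ provides the slack to absorb all the $\eta$-errors and the $\varepsilon/2$ lost in choosing $s_0$.
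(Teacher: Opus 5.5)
\textbf{Verdict: there is a genuine gap, in Step 2 (the distance bound).}

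Your Step 3 is sound, and it handles the defect differently from the paper. You shrink only the uncorrected part, via the multiplier $\lambda(1-\phi)+\phi h$ with $\lambda=1-\varepsilon/M$. This gives $\|\mu'\|\le\max(M-\varepsilon,\sup_V\|\mu\|)$. Since $\|\nu\|\ge\re\nu(K)>M-\varepsilon$, taking $s_0$ to be a near-maximizer of $\|\mu(\cdot)\|$ on $U$ makes the defect at $s_0$ as small as you like; no semicontinuity is needed for that choice. This replaces the paper's case split: it adds the mass $(1-r)\varepsilon\,\psi(s)\delta_{t_0}$ when $\sup_U\|\mu\|\le M-(1-r)\varepsilon$, and otherwise corrects the phase at some $s_1\in U$ with $\|\mu(s_1)\|>M-(1-r)\varepsilon$.

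The problem is Step 2. Since $\|\mu'(s)-\mu(s)\|\le(1-\phi(s))\varepsilon+\phi(s)\int_K|h-1|\,d|\mu(s)|$, the distance bound reduces to
\[
\sup_{s\in V}\int_K|h-1|\,d|\mu(s)|\le\sqrt{2M\varepsilon}+2\varepsilon,
\]
and you never prove this. Both of your routes apply Cauchy--Schwarz at the moved point $s$, which puts the deficit $\|\mu(s)\|-\|\nu\|$ under a square root multiplied by $M$.
\begin{itemize}
\item With $f=1+\re h$ you get $\int_K(1-\re h)\,d|\mu(s)|<2(\|\mu(s)\|-\|\nu\|)+\int_K(1-\re h)\,d|\nu|+\eta$. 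With your tolerance $(r-\frac12)\varepsilon$ this yields only $\int_K|h-1|\,d|\mu(s)|\lesssim\sqrt{2M\cdot 2r\varepsilon}$, which exceeds $\sqrt{2M\varepsilon}+2\varepsilon$ when $M\gg\varepsilon$.
\item The fallback through $\overline{\theta_s}$ fails the same way. The bound $\int_K|\overline{\theta_s}-1|\,d|\mu(s)|<\sqrt{2M\varepsilon}$ is fine. But the remaining term $\int_K|h\theta_s-1|\,d|\mu(s)|$ is naturally controlled only by $\bigl(2M(\|\mu(s)\|-\re\int_K h\,d\mu(s))\bigr)^{1/2}$, which is of order $\sqrt{M(r-\frac12)\varepsilon}$, not $O(\varepsilon)$.
\end{itemize}

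The missing idea is to transfer the $L^1$ quantity itself, not its square, from $s_0$ to nearby points. Apply Lemma~\ref{semiconweightvar} to the weight $f=1-\frac12|h-1|\ge0$, as the paper does with $\varphi=|q-1|/2$. Because $\frac12\int_K|h-1|\,d|\mu(s)|=\|\mu(s)\|-\int_K f\,d|\mu(s)|$, lower semicontinuity gives, for $s$ near $s_0$,
\[
\int_K|h-1|\,d|\mu(s)|<\int_K|h-1|\,d|\nu|+2\bigl(\|\mu(s)\|-\|\nu\|\bigr)+2\eta ,
\]
so the deficit now enters linearly. The paper keeps it at most $2(1-r)\varepsilon$ through the choice $\|\mu(s_1)\|>M-(1-r)\varepsilon$. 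In your setup, bound $\|\mu(s)\|$ on $V$ by $\sup_V\|\mu\|$ rather than by $M$. Then the extra loss is below $(2r-1)\varepsilon<\varepsilon$, and together with $\int_K|h-1|\,d|\nu|<\sqrt{2M\varepsilon}+\eta$ your construction closes with room to spare.

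Alternatively, shrinking your tolerance to $\kappa\ll\varepsilon^2/M$ would also rescue the Cauchy--Schwarz route, again only if you use $\sup_V\|\mu\|$ instead of $M$. As written, though, the distance estimate, which is the heart of the lemma, is not established.
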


\begin{proof}
	If $M=0$, then $\mu=0$, and the conclusion is trivial by taking $\mu'=\mu$ and $U'=U$.
	Thus, assume that $M>0$.
	
	Put
		$$
		a:=1-r.
		$$
	Since $1/2<r<1$, we have $0<a<\frac12$.
	
	We distinguish two cases.
	
	\medskip
	
	\noindent
	\textbf{Case 1.} Assume first that
		$$
		\sup_{s\in U}\|\mu(s)\|\leq M-a\varepsilon.
		$$
	Fix $s_0\in U$ and $t_0\in K$.
	Since $S$ is compact Hausdorff, by Urysohn's lemma, there exists a continuous function
	$\psi:S\to[0,1]$ such that
		$$
		\psi(s_0)=1 \qquad \text{and} \qquad \supp (\psi)\subset U.
		$$
	Define
		$$
		\mu'(s):=\mu(s)+a\varepsilon\,\psi(s)\delta_{t_0},
		$$
	where $\delta_{t_0}$ denotes the Dirac measure at the point $t_0$.
	Then $\mu'$ is weak-star continuous. If $s\notin U$, then $\mu'(s)=\mu(s)$. If $s\in U$, then
		$$
		\|\mu'(s)\| \leq \|\mu(s)\|+a\varepsilon \leq M.
		$$
	Therefore,
		$$
		\|\mu'\| \leq M.
		$$
	Hence, at the point $s_0$,
		$$
		\re \mu'(s_0)(K) = \re \mu(s_0)(K)+a\varepsilon > M-\varepsilon+a\varepsilon = M-r\varepsilon \geq \|\mu'\|-r\varepsilon.
		$$
	By weak-star continuity, the scalar map $s\longmapsto \mu'(s)(K)$ is continuous.
	Hence, there exists a non-empty open neighborhood $U'\subset U$ of $s_0$ such that
		$$
		\re \mu'(s)(K)>\|\mu'\|-r\varepsilon
		$$
	for every $s\in U'$.
	Finally,
		$$
		\|\mu'-\mu\|\leq a\varepsilon\leq \varepsilon\leq \sqrt{2M\varepsilon}+2\varepsilon.
		$$
	This proves the result in Case 1.
	
	\medskip
	
	\noindent
	\textbf{Case 2.} Assume now that
		$$
		\sup_{s\in U}\|\mu(s)\|>M-a\varepsilon.
		$$
	Fix $s_1\in U$ such that
		\begin{equation}\label{lowbounds1}
			\|\mu(s_1)\|>M-a\varepsilon.
		\end{equation}
	Hence, by hypothesis,
		$$
		\|\mu(s_1)\|-\re\mu(s_1)(K)<\varepsilon.
		$$
	Write the polar decomposition of $\mu(s_1)$ as
		$$
		d\mu(s_1)=\theta\,d|\mu(s_1)|,
		$$
	where $|\theta|=1$ almost everywhere with respect to $|\mu(s_1)|$.
	
	Let $\gamma>0$ be small enough so that
		\begin{equation}\label{gamma}
			\gamma<(2r-1)\varepsilon.
		\end{equation}
	Since $\mu(s_1)\in\mathcal M(K)$, its total variation $|\mu(s_1)|$ is a finite positive regular Borel measure.
	Thus, by \cite[Theorem~3.14]{Rudin}, we have that $C(K)$ is dense $L^1(K,|\mu(s_1)|)$.
	Hence, there exists $u \in C(K)$ such that
		$$
		\left\|u-\overline{\theta}\right\|_{L^1} = \int_K |u-\overline{\theta}|\,d|\mu(s_1)| < \gamma.
		$$
	Note that the function $u$ is continuous but it may not belong to $B_{C(K)}$.
	Nonetheless, by composing the radial projection $P : \mathbb C\to\overline{\mathbb D}$ defined by
		$$
		P(z) := \begin{cases}
			z & \text{if } |z|\leq 1,\\
			\dfrac{z}{|z|} & \text{if } |z|>1,
		\end{cases}
		$$
	with $u$ it yields that
		$$
		q:=P\circ u \in B_{C(K)}.
		$$
	The important point is that this operation does not worsen the approximation to $\overline{\theta}$.
	Indeed, since $\overline{\theta}(t) \in \overline{\mathbb D}$ for $|\mu(s_1)|$-a.e. $t$ and $P$ is the metric projection onto $\overline{\mathbb D}$, we have
		$$
		\left| q(t)-\overline{\theta(t)} \right| = \left| P(u(t))-\overline{\theta(t)} \right| \leq \left| u(t)-\overline{\theta(t)} \right|
		$$
	for $|\mu(s_1)|$-almost every $t$.
	Therefore,
		$$
		\int_K \left| q-\overline{\theta} \right|\,d|\mu(s_1)| \leq \int_K \left| u-\overline{\theta} \right|\,d|\mu(s_1)| < \gamma.
		$$
	Since $|\theta|=1$ $|\mu(s_1)|$-a.e., $|q\theta-1| = \left| (q-\overline{\theta})\theta \right| = \left| q-\overline{\theta} \right|$, $|\mu(s_1)|$-a.e.
	Thus,
		$$
		\int_K |q\theta-1|\,d|\mu(s_1)|<\gamma.
		$$
	As $\re z\geq 1-|z-1|$ for every $z\in\mathbb C$, we obtain $\re(q\theta)\geq 1-|q\theta-1|$.
	Consequently,
		\begin{align*}
			\re \int_K q\,d\mu(s_1) & = \re \int_K q \theta\,d|\mu(s_1)| =  \int_K \re(q \theta)\,d|\mu(s_1)| \\
			& \geq \int_K 1\,d|\mu(s_1)| - \int_K |q\theta-1|\,d|\mu(s_1)| = |\mu(s_1)|(K) - \int_K |q\theta-1|\,d|\mu(s_1)| \\
			& > \|\mu(s_1)\|-\gamma > M-a\varepsilon-\gamma.
		\end{align*}
	Since $a=1-r$ and $\gamma$ satisfies \eqref{gamma}, it yields
		\begin{equation}\label{lowboundintq}
			\re \int_K q\,d\mu(s_1)>M-r\varepsilon.
		\end{equation}
	
	We now estimate the perturbation cost.
	Since $\left|\overline{\theta}-1\right|^2=|\theta-1|^2=2(1-\re\theta)$ and by Cauchy's inequality, we have
		\begin{align*}
			\int_K \left|\overline{\theta}-1\right|\,d|\mu(s_1)| & \leq \left( \int_K \left|\overline{\theta}-1\right|^2\,d|\mu(s_1)| \right)^{1/2} \left( \int_K \,d|\mu(s_1)| \right)^{1/2} \\
			& = \sqrt{|\mu(s_1)|(K)} \left( \int_K \left|\overline{\theta}-1\right|^2\,d|\mu(s_1)| \right)^{1/2} \\
			& = \sqrt{\|\mu(s_1)\|} \left( \int_K 2(1-\re \theta)\,d|\mu(s_1)| \right)^{1/2} \\
			& \leq \sqrt{2\|\mu(s_1)\| \left( |\mu(s_1)|(K)-\re(\theta)|\mu(s_1)|(K) \right)} \\
			& = \sqrt{2\|\mu(s_1)\| \left( \|\mu(s_1)\|-\re(\theta|\mu(s_1)|(K)) \right)} \\
			& = \sqrt{2\|\mu(s_1)\|\left(\|\mu(s_1)\|-\re\mu(s_1)(K)\right)} \\
			& < \sqrt{2M\varepsilon}.
		\end{align*}
	Therefore, by the triangle inequality,
		\begin{equation}\label{intq-1}
			\int_K |q-1|\,d|\mu(s_1)| \leq \int_K \left|q-\overline{\theta}\right|\,d|\mu(s_1)| + \int_K \left|\overline{\theta}-1\right|\,d|\mu(s_1)| < \gamma+\sqrt{2M\varepsilon}.
		\end{equation}
	
	Define
		$$
		\varphi := \frac{|q-1|}{2}.
		$$
	Then, $\varphi\in C(K)$ and $0\leq\varphi\leq1$.
	By Lemma~\ref{semiconweightvar}, applied to $f=1-\varphi$, the map
		$$
		\nu\in \mathcal M(K) \longmapsto \int_K(1-\varphi)\,d|\nu|
		$$
	is weak-star lower semicontinuous on $\mathcal M(K)$.
	Since $\mu:S\to\mathcal M(K)$ is weak-star continuous, the composition
		$$
		s\in S\longmapsto \int_K(1-\varphi)\,d|\mu(s)|
		$$
	is lower semicontinuous on $S$.
	Therefore, for every $\eta>0$, there exists an open neighborhood $U_0\subset U$ of $s_1$ such that, for every $s\in U_0$,
		$$
		\int_K(1-\varphi)\,d|\mu(s)| > \int_K(1-\varphi)\,d|\mu(s_1)|-\eta.
		$$
	Hence, for any $s\in U_0$, we have
		\begin{align*}
			\int_K\varphi\,d|\mu(s)| & = \int_K\,d|\mu(s)| - \int_K(1-\varphi)\,d|\mu(s)| = \|\mu(s)\|-\int_K(1-\varphi)\,d|\mu(s)| \\
			& < M-\int_K(1-\varphi)\,d|\mu(s_1)|+\eta = M-\|\mu(s_1)\|+\int_K\varphi\,d|\mu(s_1)|+\eta
		\end{align*}
	Using \eqref{lowbounds1}, this gives
		$$
		\int_K\varphi\,d|\mu(s)| < a\varepsilon+\int_K\varphi\,d|\mu(s_1)|+\eta.
		$$
	By definition of  $\varphi$ and \eqref{intq-1}, it follows that, for $s\in U_0$
		\begin{equation}\label{manylettersbound}
			\int_K |q-1|\,d|\mu(s)| < 2a\varepsilon+\int_K |q-1|\,d|\mu(s_1)|+2\eta < 2a\varepsilon+\sqrt{2M\varepsilon}+\gamma+2\eta
		\end{equation}
	Now, as $\gamma$ satisfies \eqref{gamma}, it follows that $\gamma <2\varepsilon(1-a)$.
	Indeed, as $a=1-r$,
		$$
		\gamma < (2r-1)\varepsilon = 2r\varepsilon - \varepsilon < 2r\varepsilon= 2\varepsilon (1-(1-r))= 2\varepsilon(1-a.)
		$$
	So, $\gamma$ satisfies that
		$$
		2a\varepsilon+\gamma<2\varepsilon.
		$$
	Hence, we can choose $\eta$ small enough so that
		$$
		2a\varepsilon+\gamma+2\eta<2\varepsilon.
		$$
	Then, by \eqref{manylettersbound}, for every $s\in U_0$,
		\begin{equation}\label{boundq1U0}
			\int_K |q-1|\,d|\mu(s)| < \sqrt{2M\varepsilon}+2\varepsilon.
		\end{equation}
		
	Now, by Urysohn's lemma, there exists a continuous function $\psi:S\to[0,1]$ such that
		$$
		\psi(s_1)=1 \qquad \text{and} \qquad \supp (\psi)\subset U_0.
		$$
	Define
		$$
		d\mu'(s)=\left((1-\psi(s))+\psi(s)q\right)d\mu(s).
		$$
	Since $0\leq\psi(s)\leq1$ and $\|q\|_\infty\leq1$, we have that
		$$
		(1-\psi(s))+\psi(s)q(t) \in \overline{\mathbb D}
		$$
	for every $s\in S$ and $t\in K$ since it is a convex combination of the two complex numbers $1$ and $q(t)$.
	Therefore,
		$$
		\|\mu'(s)\|\leq \|\mu(s)\|
		$$
	for every $s\in S$, and hence
		$$
		\|\mu'\|\leq M.
		$$
		
	We claim that the map $\mu'$ is weak-star continuous.
	Indeed, this follows from the fact that, for each $f\in C(K)$, the map
		$$
		s\in S \longmapsto \int_K f\,d\mu'(s) = (1-\psi(s))\int_K f\,d\mu(s) + \psi(s)\int_K fq\,d\mu(s),
		$$
	is continuous as $\psi\in C(S)$, $s\in S\longmapsto \int_K f\,d\mu(s)$ is continuous as $f\in C(K)$ and $\mu$ is weak-star continuous, and $s\in S\longmapsto \int_K fq\,d\mu(s)$ is continuous as $fq\in C(K)$ and $\mu$ is weak-star continuous.
	
	At the point $s_1$, since $\psi(s_1)=1$, we have
		$$
		d\mu'(s_1)=q\,d\mu(s_1).
		$$
	Thus, by \eqref{lowboundintq}, we have
		$$
		\re \mu'(s_1)(K) = \re \int_K \,d\mu'(s_1) = \re \int_K q\,d\mu(s_1) > M-r\varepsilon \geq \|\mu'\|-r\varepsilon.
		$$
	By weak-star continuity of $\mu'$, there is an open neighborhood $U'\subset U_0\subset U$ of $s_1$ such that
		$$
		\re \mu'(s)(K)>\|\mu'\|-r\varepsilon
		$$
	for every $s\in U'$.
	
	Finally, observe first that
		$$
		\|\mu'-\mu\| = \sup_{s\in S}\|\mu'(s)-\mu(s)\|.
		$$
	Now, on the one hand, if $s\notin U_0$, then $\psi(s)=0$, so $\mu'(s)=\mu(s)$.
	On the other hand, assume that $s\in U_0$.
	Then, since
		$$
		d(\mu'(s)-\mu(s)) = \psi(s)(q-1)\,d\mu(s),
		$$
	we have
		$$
		d|\mu'(s)-\mu(s)| = |\psi(s)(q-1)|\,d|\mu(s)|.
		$$
	Hence, using $0\leq\psi(s)\leq1$ and \eqref{boundq1U0},
		$$
		\|\mu'(s)-\mu(s)\| = \int_K |\psi(s)(q-1)|\,d|\mu(s)| \leq \int_K |q-1|\,d|\mu(s)| \leq \sqrt{2M\varepsilon}+2\varepsilon.
		$$
	Therefore,
		$$
		\|\mu'-\mu\|\leq \sqrt{2M\varepsilon}+2\varepsilon.
		$$
	This proves the second case and concludes the proof the lemma.
\end{proof}

\section*{Main result}

\begin{theorem}
	Let $K$ and $S$ be compact Hausdorff spaces.
	Then,
		$$
		\overline{NA(C(K),C(S))} = L(C(K),C(S)),
		$$
	where $C(K)$ and $C(S)$ are the Banach spaces of complex continuous functions on $K$ and $S$, respectively, endowed with supremum norm.
\end{theorem}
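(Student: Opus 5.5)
We identify each $T\in L(C(K),C(S))$ with the weak-star continuous map $\mu_T:S\to\mathcal M(K)$, $\mu_T(s)=T^*\delta_s$. This identification is an isometric isomorphism, with $\|T\|=\|\mu_T\|=\sup_s\|\mu_T(s)\|$, and $(Tf)(s)=\int_K f\,d\mu_T(s)$. Fix $T$ with $\|T\|>0$ and $\beta>0$. The aim is to construct, by iterating Lemma~\ref{defect-reduction}, a Cauchy sequence of weak-star continuous maps $\mu_n$ with $\|\mu_n-\mu_T\|<\beta$ whose limit attains its norm at the constant function (after a unimodular change of variable).

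\textbf{Step 1: reduction to the constant function.} Apply Lemma~\ref{complexprep} to $\mu_T$ with some $\delta_0>0$. This gives a unimodular $h\in C(K)$ and a nonempty open set $U_0$ on which $\re\int_K h\,d\mu_T(s)>\|\mu_T\|-\delta_0$. Replace $\mu_T$ by $\mu_0(s):=h\,\mu_T(s)$, which corresponds to $T\circ M_h$, where $M_h$ is multiplication by $h$. Since $M_h$ is a surjective isometry of $C(K)$, this change preserves norms, distances and norm attainment. So it suffices to approximate $\mu_0$, which now satisfies $\re\mu_0(s)(K)>\|\mu_0\|-\varepsilon_0$ on $U_0$ with $\varepsilon_0=\delta_0$.

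\textbf{Step 2: iteration.} Fix $r\in(1/2,1)$, say $r=3/4$. Inductively apply Lemma~\ref{defect-reduction} to $(\mu_n,U_n,\varepsilon_n)$. This yields $\mu_{n+1}$ and $U_{n+1}\subset U_n$ with $\|\mu_{n+1}\|\le\|\mu_n\|$ and $\re\mu_{n+1}(s)(K)>\|\mu_{n+1}\|-r\varepsilon_n$ on $U_{n+1}$. Set $\varepsilon_{n+1}=r\varepsilon_n$, so that $\varepsilon_n=r^n\varepsilon_0$. The lemma's hypothesis uses $M=\|\mu_n\|$ and the conclusion is stated relative to $\|\mu_{n+1}\|$, which is exactly the form needed to continue. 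The increments then satisfy
\[
\|\mu_{n+1}-\mu_n\|\le\sqrt{2\|\mu_0\|\varepsilon_n}+2\varepsilon_n\le \sqrt{2\|\mu_0\|\varepsilon_0}\,r^{n/2}+2\varepsilon_0r^n .
\]
This is summable, with total sum at most $C(\sqrt{\varepsilon_0}+\varepsilon_0)$, where $C$ depends only on $r$ and $\|T\|$. Choosing $\varepsilon_0$ small makes $\sum_n\|\mu_{n+1}-\mu_n\|<\beta$. The sequence therefore converges in norm to some $\mu_\infty$. A uniform limit of weak-star continuous maps is weak-star continuous, so $\mu_\infty$ corresponds to an operator $R$ with $\|R-T\circ M_h\|<\beta$.

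\textbf{Step 3: norm attainment; this is the main obstacle.} The open sets $U_n$ are nested but might have empty intersection. I would fix this by shrinking at each stage: choose $U_{n+1}$ with $\overline{U_{n+1}}\subset U_n$, which is possible since $S$ is compact Hausdorff and hence regular, and the lemma permits shrinking $U'$ around $s_1$. Compactness then gives a point $s^*\in\bigcap_n\overline{U_{n+1}}\subset\bigcap_n U_n$. By continuity of $s\mapsto\re\mu_{n+1}(s)(K)$, the strict inequality on $U_{n+1}$ becomes a non-strict one on its closure, so
\[
\re\mu_{n+1}(s^*)(K)\ge\|\mu_{n+1}\|-r\varepsilon_n \quad\text{for all } n.
\]
Letting $n\to\infty$, using norm convergence, gives $\re\mu_\infty(s^*)(K)\ge\|\mu_\infty\|$. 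Hence $|(R\mathbf 1)(s^*)|\ge\|R\|$, so $R$ attains its norm at the constant function $\mathbf 1$.

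Finally, $R\circ M_{\bar h}$ attains its norm at $h$ and lies within $\beta$ of $T$. The points I would check carefully are the closure-nesting and the uniform control of the constant $M$ along the iteration, which holds because the norms $\|\mu_n\|$ are nonincreasing.
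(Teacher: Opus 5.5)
Your proposal is correct and follows the paper's proof essentially step for step: the phase correction via Lemma~\ref{complexprep} and multiplication by $h$, the geometric iteration of Lemma~\ref{defect-reduction} with $\varepsilon_n=r^n\varepsilon_0$ and summable increments, the norm limit, and undoing the multiplication by $h$. Your nested-closure compactness argument in Step 3 is valid but unnecessary. The paper picks an arbitrary $s_n\in U_n$ for each $n$ and obtains $\sup_{s}|\nu_\infty(s)(K)|\geq\|\nu_\infty\|$, which already says $\|R\mathbf 1\|_\infty=\|R\|$, i.e.\ norm attainment at $\mathbf 1$; the supremum is attained at some point of $S$ anyway, by compactness and continuity of $R\mathbf 1$.
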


\begin{proof}
	By the standard representation theorem for operators into $C(S)$ (see \cite[Theorem 1, p. 490]{DS}), given an operator $T\in L(C(K),C(S))$ there exists a weak-star continuous map $\mu:S\to\mathcal M(K)$ such that
		$$
		Tf(s)=\int_K f\,d\mu(s).
		$$
	and
		$$
		\|T\|=\|\mu\|=\sup_{s\in S}\|\mu(s)\|.
		$$
	Fix $\rho>0$ arbitrary.
	We will approximate $\mu$ by a weak-star continuous map representing a norm-attaining operator.
	
	Fix $r$ with $1/2<r<1$ and choose $\varepsilon_0>0$ sufficiently small so that
		$$
		\sqrt{2\|\mu\|\varepsilon_0}\sum_{n=0}^{\infty} r^{n/2} + 2\varepsilon_0\sum_{n=0}^{\infty} r^n < \rho.
		$$
	By Lemma~\ref{complexprep}, applied with $\delta=\varepsilon_0$, there exist a function
	$h\in C(K)$, with $|h|=1$, and a nonempty open set $U_0\subset S$ such that
		$$
		\re\int_K h\,d\mu(s)>\|\mu\|-\varepsilon_0
		$$
	for every $s\in U_0$.
	
	Define
		$$
		d\nu_0(s)=h\,d\mu(s).
		$$
	Then $\nu_0:S\to\mathcal M(K)$ is weak-star continuous and
		$$
		\|\nu_0\|=\|\mu\|.
		$$
	Moreover,
		$$
		\re\nu_0(s)(K)>\|\nu_0\|-\varepsilon_0
		$$
	for every $s\in U_0$.
	
	For every $n\in \N$, set
		$$
		\varepsilon_n := r^n\varepsilon_0.
		$$
	Applying Lemma~\ref{defect-reduction} inductively, we obtain weak-star continuous maps $\nu_n:S\to\mathcal M(K)$ with $\|\nu_n\|\leq \|\nu_{n-1}\|$ and nonempty open sets $U_n\subset S$ such that
		$$
		\re\nu_n(s)(K)>\|\nu_n\|-\varepsilon_n
		$$
	for every $s\in U_n$, and
		$$
		\|\nu_{n+1}-\nu_n\| \leq \sqrt{2\|\nu_n\|\varepsilon_n}+2\varepsilon_n.
		$$
	Note that $\|\nu_n\|\leq\|\nu_0\|$ for every $n\in \N$.
	Hence,
		$$
		\|\nu_{n+1}-\nu_n\| \leq \sqrt{2\|\nu_0\|\varepsilon_n}+2\varepsilon_n,
		$$
	for all $n\in \N\cup \{0\}$.
	Therefore,
		$$
		\sum_{n=0}^{\infty}\|\nu_{n+1}-\nu_n\| \leq \sqrt{2\|\nu_0\|\varepsilon_0}\sum_{n=0}^{\infty}r^{n/2} + 2\varepsilon_0\sum_{n=0}^{\infty}r^n < \rho.
		$$
	Thus, $(\nu_n)$ is a Cauchy sequence in the Banach space of weak-star continuous maps from $S$ into $\mathcal M(K)$ endowed with the norm $\|\nu\| = \sup_{s\in s} \|\nu(s)\|$.
	Hence, $(\nu_n)$ converges to some
		$$
		\nu_\infty := \lim_n\nu_n.
		$$
	It is clear that $\nu_\infty:S\to\mathcal M(K)$ is weak-star continuous, but also
		$$
		\|\nu_\infty-\nu_0\|<\rho.
		$$
	Indeed, since
		$$
		\|\nu_N-\nu_0\| = \left\| \sum_{n=0}^{N-1} (\nu_{n+1}-\nu_0) \right\| \leq \sum_{n=0}^{N-1} \left\| \nu_{n+1}-\nu_0 \right\|,
		$$
	passing to the limit we have
		$$
		\|\nu_\infty-\nu_0\| \leq \sum_{n=0}^{\infty}\|\nu_{n+1}-\nu_n\| < \rho.
		$$

	For each $n\in \N\cup \{0\}$, fix $s_n\in U_n$.
	Then,
		$$
		\re \nu_n(s_n)(K)>\|\nu_n\|-\varepsilon_n.
		$$
	We claim that
		$$
		\re\nu_\infty(s_n)(K) \geq \re\nu_n(s_n)(K)-\|\nu_n-\nu_\infty\|.
		$$
	Indeed, first note that
		$$
		\re\nu_\infty(s_n)(K) = \re\nu_n(s_n)(K) + \re\big(\nu_\infty(s_n)-\nu_n(s_n)\big)(K).
		$$
	Since $\operatorname{Re} z\geq -|z|$ for every $z\in\mathbb C$, it follows that
		$$
		\re\left(\nu_\infty(s_n)-\nu_n(s_n)\right)(K) \geq -\left|\left(\nu_\infty(s_n)-\nu_n(s_n)\right)(K)\right| \geq -\|\nu_\infty(s_n)-\nu_n(s_n)\| \geq -\|\nu_\infty-\nu_n\|,
		$$
	which yields the claimed inequality.
	Combining this estimate with the choice of $s_n\in U_n$, we obtain
		$$
		\sup_{s\in S}|\nu_\infty(s)(K)| \geq \re\nu_\infty(s_n)(K) \geq \re\nu_n(s_n)(K)-\|\nu_n-\nu_\infty\| > \|\nu_n\|-\varepsilon_n-\|\nu_n-\nu_\infty\|.
		$$
	Since $\nu_n\to\nu_\infty$ in norm, by the reverse triangle inequality $\|\nu_n\|\to\|\nu_\infty\|$.
	Therefore, passing to the limit superior, we obtain
		$$
		\sup_{s\in S}|\nu_\infty(s)(K)|\geq \|\nu_\infty\|.
		$$
	The reverse inequality is automatic, since
		$$
		|\nu_\infty(s)(K)|\leq \|\nu_\infty(s)\|\leq \|\nu_\infty\|
		$$
	for every $s\in S$.
	Hence,
		$$
		\sup_{s\in S}|\nu_\infty(s)(K)|=\|\nu_\infty\|.
		$$
	Thus, the operator represented by $\nu_\infty$ attains its norm at the constant function $1$.
	
	Finally, return to the original variables. Define
		$$
		d\mu_\infty(s) := \overline{h}\,d\nu_\infty(s),
		$$
	for all $s\in S$.
	Since $d\nu_0(s)=h\,d\mu(s)$ and $|h|=1$, we have
		$$
		d\mu(s)=\overline{h}\,d\nu_0(s).
		$$
	Therefore,
		$$
		d(\mu_\infty(s)-\mu(s)) = \overline{h}\,d(\nu_\infty(s)-\nu_0(s)).
		$$
	Since $|\overline{h}|=1$, multiplication by $\overline{h}$ preserves the total variation norm. Hence
		$$
		\|\mu_\infty(s)-\mu(s)\| = \|\nu_\infty(s)-\nu_0(s)\|
		$$
	for every $s\in S$. Taking suprema over $s\in S$, we obtain
		$$
		\|\mu_\infty-\mu\| = \|\nu_\infty-\nu_0\| < \rho.
		$$
	Let $T_\infty \in L(C(K),C(S))$ be the operator represented by $\mu_\infty$.
	Then,
		$$
		T_\infty h(s) = \int_K h\,d\mu_\infty(s) = \nu_\infty(s)(K).
		$$
	Therefore,
		$$
		\|T_\infty h\|_\infty = \sup_{s\in S}|\nu_\infty(s)(K)| = \|\nu_\infty\| = \|\mu_\infty\| = \|T_\infty\|.
		$$
	Thus $T_\infty$ attains its norm at $h$, and
		$$
		\|T_\infty-T\|<\rho.
		$$
	Since $\rho>0$ is arbitrary, this concludes the proof.
\end{proof}


\begin{thebibliography}{FaHaMo}
	\bibitem{DS} \textsc{N. Dunford and J.~T. Schwartz}, {\it Linear operators. Part I}, reprint of the 1958 original, Wiley Classics Library A Wiley-Interscience Publication, Wiley, New York, 1988.
	
	\bibitem{JW} \textsc{J.~A. Johnson and J.~C. Wolfe}, {\it Norm attaining operators}, Studia Math. {\bf 65} (1979), no.~1, 7--19.
	
	\bibitem{Rudin} \textsc{W. Rudin}, {\it Real and complex analysis}, third edition, McGraw-Hill, New York, 1987.
%
%
%
\end{thebibliography}
\end{document}